\title[On  the Griffiths-Yukawa coupling length ]{On the  Griffiths-Yukawa coupling length of some Calabi-Yau families}
\author[Mao Sheng]{Mao Sheng}
\author[Jinxing Xu]{Jinxing Xu}
\email{msheng@ustc.edu.cn}\email{xujx02@ustc.edu.cn}
\address{School of Mathematical Sciences,
University of Science and Technology of China, Hefei, 230026, China}
\thanks{}
\begin{document}
\theoremstyle{plain}
\newtheorem{thm}{Theorem}[section]
\newtheorem{theorem}[thm]{Theorem}
\newtheorem{lemma}[thm]{Lemma}
\newtheorem{corollary}[thm]{Corollary}
\newtheorem{proposition}[thm]{Proposition}
\newtheorem{addendum}[thm]{Addendum}
\newtheorem{variant}[thm]{Variant}
\theoremstyle{definition}
\newtheorem{construction}[thm]{Construction}
\newtheorem{notations}[thm]{Notations}
\newtheorem{question}[thm]{Question}
\newtheorem{problem}[thm]{Problem}
\newtheorem{remark}[thm]{Remark}
\newtheorem{remarks}[thm]{Remarks}
\newtheorem{definition}[thm]{Definition}
\newtheorem{claim}[thm]{Claim}
\newtheorem{assumption}[thm]{Assumption}
\newtheorem{assumptions}[thm]{Assumptions}
\newtheorem{properties}[thm]{Properties}
\newtheorem{example}[thm]{Example}
\newtheorem{conjecture}[thm]{Conjecture}
\numberwithin{equation}{thm}

\newcommand{\sA}{{\mathcal A}}
\newcommand{\sB}{{\mathcal B}}
\newcommand{\sC}{{\mathcal C}}
\newcommand{\sD}{{\mathcal D}}
\newcommand{\sE}{{\mathcal E}}
\newcommand{\sF}{{\mathcal F}}
\newcommand{\sG}{{\mathcal G}}
\newcommand{\sH}{{\mathcal H}}
\newcommand{\sI}{{\mathcal I}}
\newcommand{\sJ}{{\mathcal J}}
\newcommand{\sK}{{\mathcal K}}
\newcommand{\sL}{{\mathcal L}}
\newcommand{\sM}{{\mathcal M}}
\newcommand{\sN}{{\mathcal N}}
\newcommand{\sO}{{\mathcal O}}
\newcommand{\sP}{{\mathcal P}}
\newcommand{\sQ}{{\mathcal Q}}
\newcommand{\sR}{{\mathcal R}}
\newcommand{\sS}{{\mathcal S}}
\newcommand{\sT}{{\mathcal T}}
\newcommand{\sU}{{\mathcal U}}
\newcommand{\sV}{{\mathcal V}}
\newcommand{\sW}{{\mathcal W}}
\newcommand{\sX}{{\mathcal X}}
\newcommand{\sY}{{\mathcal Y}}
\newcommand{\sZ}{{\mathcal Z}}
\newcommand{\A}{{\mathbb A}}
\newcommand{\B}{{\mathbb B}}
\newcommand{\C}{{\mathbb C}}
\newcommand{\D}{{\mathbb D}}
\newcommand{\E}{{\mathbb E}}
\newcommand{\F}{{\mathbb F}}
\newcommand{\G}{{\mathbb G}}
\newcommand{\HH}{{\mathbb H}}
\newcommand{\I}{{\mathbb I}}
\newcommand{\J}{{\mathbb J}}
\renewcommand{\L}{{\mathbb L}}
\newcommand{\M}{{\mathbb M}}
\newcommand{\N}{{\mathbb N}}
\renewcommand{\P}{{\mathbb P}}
\newcommand{\Q}{{\mathbb Q}}
\newcommand{\R}{{\mathbb R}}
\newcommand{\SSS}{{\mathbb S}}
\newcommand{\T}{{\mathbb T}}
\newcommand{\U}{{\mathbb U}}
\newcommand{\V}{{\mathbb V}}
\newcommand{\W}{{\mathbb W}}
\newcommand{\X}{{\mathbb X}}
\newcommand{\Y}{{\mathbb Y}}
\newcommand{\Z}{{\mathbb Z}}
\newcommand{\id}{{\rm id}}
\newcommand{\rank}{{\rm rank}}
\newcommand{\END}{{\mathbb E}{\rm nd}}
\newcommand{\End}{{\rm End}}
\newcommand{\Hom}{{\rm Hom}}
\newcommand{\Hg}{{\rm Hg}}
\newcommand{\tr}{{\rm tr}}
\newcommand{\Sl}{{\rm Sl}}
\newcommand{\Gl}{{\rm Gl}}
\newcommand{\Cor}{{\rm Cor}}
\newcommand{\Aut}{\mathrm{Aut}}
\newcommand{\Sym}{\mathrm{Sym}}
\newcommand{\ModuliCY}{\mathfrak{M}_{CY}}
\newcommand{\HyperCY}{\mathfrak{H}_{CY}}
\newcommand{\ModuliAR}{\mathfrak{M}_{AR}}
\newcommand{\Modulione}{\mathfrak{M}_{1,n+3}}
\newcommand{\Modulin}{\mathfrak{M}_{n,n+3}}
\newcommand{\Gal}{\mathrm{Gal}}
\newcommand{\Spec}{\mathrm{Spec}}
\newcommand{\Jac}{\mathrm{Jac}}

\newcommand{\fM}{\mathfrak{M}_{AR}}

\newcommand{\proofend}{\hspace*{13cm} $\square$ \\}
\maketitle

\begin{abstract}
We determine the  Griffiths-Yukawa coupling length  of the Calabi-Yau universal  families coming from hyperplane arrangements.
\end{abstract}

\section{Introduction}\label{section:introduction}

In the study of moduli spaces of Calabi-Yau (CY) manifolds, an effective method is to investigate the associated variation of Hodge structure. To every variation of Hodge structure $\V$, we can associate an interesting numerical invariant $\varsigma(\V)$, called the  Griffiths-Yukawa coupling length, which is introduced in \cite{VZ}.  The connection of Griffiths-Yukawa coupling length with Shararevich¡¯s conjecture for CY manifolds has been intensively studied
(see e.g., \cite{LTYZ,Zhang Yi}). It has been shown that, for example, a CY family with maximal Griffiths-Yukawa coupling length is rigid.

On the other hand, among the various Calabi-Yau moduli spaces, the ones coming from hyperplane arrangements are particular interesting, due to their analogue of elliptic curves and their relations to Gross' geometric realization problem (see e.g., \cite{GSSZ,G,SXZ2}).

The main purpose of this note is the determination  of  the Griffiths-Yukawa coupling length  for the CY families coming from hyperplane arrangements. More precisely,  let $m,n,r$ be positive integers satisfying the  condition:
\begin{equation}\label{CY-condition}
  r| m,\ \  n= m-\frac{m}{r}-1.
\end{equation}
Let $\fM$ be the coarse moduli space of  ordered $m$ hyperplane arrangements  in $\P^n$ in general position and $\tilde{\mathcal{X}}_{AR}\xrightarrow{\tilde{f}} \mathfrak{M}_{AR}$ be the family of CY $n$-folds which is obtained by a resolution of $r$-fold covers of $\P^n$ branched along $m$ hyperplanes in general position. This family gives a weight $n$ complex variation of Hodge structure ($\C$-VHS ) $\tilde{\V}_{pr}:=(R^n\tilde{f}_{*}\C)_{pr}$ over $\mathfrak{M}_{AR}$.  Our main result is:
\begin{theorem}\label{thm: main thm}
The  Griffiths-Yukawa coupling length of $\tilde{\V}_{pr}$ is $\varsigma(\tilde{\V}_{pr})=\frac{m}{r}-1$.
\end{theorem}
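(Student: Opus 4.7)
The plan is to analyze the Higgs field of the primitive VHS $\tilde{\V}_{pr}$ via the Esnault-Viehweg theory of cyclic covers. Since $r\mid m$, the $r$-fold cover $X\to\P^n$ branched along $D=H_1+\cdots+H_m$ is defined by a section of $L^{\otimes r}$ with $L=\sO_{\P^n}(m/r)$, and the $\mu_r$-action decomposes $R^n\tilde{f}_*\C$ into eigensheaves $\V_j$ for $j=0,\ldots,r-1$. Esnault-Viehweg identifies the Hodge pieces of each eigenspace with twisted logarithmic cohomology,
\[
E^{p,n-p}_j \;\cong\; H^{n-p}\bigl(\P^n,\,\Omega^p_{\P^n}(\log D)\otimes L^{-j}\bigr),
\]
and realizes the Higgs field $\theta_j$ on $\V_j$ as a residue/cup-product map driven by the Kodaira-Spencer class of the varying arrangement. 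Under the CY condition $n=m-m/r-1$ one computes $\Omega^n_{\P^n}(\log D)\cong\sO_{\P^n}(m/r)$, so $h^{n,0}_j=h^0(\P^n,\sO((1-j)m/r))$ equals $1$ if $j=1$ and $0$ if $j\ge 2$. Hence $E^{n,0}$ is concentrated in $\V_1$, and $\varsigma(\tilde{\V}_{pr})$ is the coupling length of $\V_1$.

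For the upper bound $\varsigma(\tilde{\V}_{pr})\le m/r-1$, I would prove the Hodge vanishing $H^q(\P^n,\Omega^{n-q}_{\P^n}(\log D)\otimes L^{-1})=0$ for $q\ge m/r$. Combining the residue exact sequences for $\Omega^\bullet_{\P^n}(\log D)$ along the strata of $D$ with Bott's formula for $H^q(\P^n,\Omega^p_{\P^n}(k))$, this cohomology reduces to a combinatorial sum of binomial terms that vanishes precisely when $q\ge m/r$ under the CY numerology. Since $\theta^k$ maps $E^{n,0}_1$ into $E^{n-k,k}_1\otimes\Sym^k\Omega^1_{\fM}$, this forces $\theta^k=0$ for $k\ge m/r$.

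For the lower bound $\varsigma(\tilde{\V}_{pr})\ge m/r-1$, I would explicitly compute the $(m/r-1)$-fold composition of $\theta_1$ applied to the generator of $E^{n,0}_1\cong H^0(\P^n,\sO_{\P^n})$. Each application of $\theta_1$ is cup-product with the Kodaira-Spencer class of the deforming hyperplanes, so the iterate becomes a multiplication map landing in $H^{m/r-1}(\P^n,\Omega^{n-m/r+1}_{\P^n}(\log D)\otimes L^{-1})\otimes\Sym^{m/r-1}\Omega^1_{\fM}$. Non-triviality of this map can be tested at a sufficiently symmetric configuration (for example, a Fermat-type arrangement), at which the iterated residue computation reduces to tracking an explicit monomial in the homogeneous coordinates of $\P^n$, or equivalently to a non-zero product in a Jacobian-type graded ring attached to $(\P^n,D)$.

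The main difficulty is this last non-vanishing: the upper bound flows essentially formally from the CY numerology and Hodge vanishing, but demonstrating non-vanishing of the $(m/r-1)$-st iterate requires careful tracking of the residue/multiplication structure of logarithmic forms globally over $\fM$, and the algebra only closes up after choosing a good test configuration where the Kodaira-Spencer map is maximally non-degenerate.
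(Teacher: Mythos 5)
Your upper bound is sound: the eigensheaf bookkeeping ($E^{n,0}$ concentrated in the $j=1$ eigenspace) and the claimed vanishing $H^q(\P^n,\Omega^{n-q}_{\P^n}(\log D)\otimes L^{-1})=0$ for $q\ge m/r$ are both correct — they match the Hodge numbers $h^{n-q,q}=\binom{n}{q}\binom{m/r-1}{q}$ that the paper obtains (by a different route), and the Bott-plus-residue computation you outline can be carried through. The genuine gap is the lower bound, which is exactly where the content of the theorem lies and which your proposal leaves unproved. Two specific problems: (i) for a reducible divisor $D=H_1\cup\cdots\cup H_m$ there is no off-the-shelf ``Jacobian-type graded ring attached to $(\P^n,D)$'' computing the infinitesimal VHS — the Griffiths--Carlson--Green formalism requires a smooth (or at worst mildly singular irreducible) hypersurface, and the Jacobian ideal of $\prod\ell_i$ is far too degenerate; you would need to construct a substitute (e.g.\ Terasoma's Jacobian ring for an auxiliary smooth complete intersection, which is what the paper does), and this construction is not a formality. (ii) Even granting such a ring, verifying that an $(m/r-1)$-fold iterated product is nonzero ``at a sufficiently symmetric configuration'' is not an argument: highly symmetric arrangements tend to leave general position, and symmetry does not by itself prevent the iterate from dying at an intermediate stage. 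You correctly identify this as the main difficulty, but identifying it is not resolving it.

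For comparison, the paper circumvents the iterated computation entirely by restricting to the sublocus $\mathfrak{M}_C\subset\fM$ parameterizing $m$ points on a line, where the relevant eigen-VHS becomes an exterior power of a weight-one VHS of a family of curves, $\V_{(1)}|_{\mathfrak{M}_C}\cong\wedge^n\W_{(1)}$. A general lemma on wedge powers then reduces non-vanishing of the $(m/r-1)$-st iterate to surjectivity of a \emph{single} Higgs map $\eta_a(v)\colon F^{1,0}_a\to F^{0,1}_a$ for the curve family, which is checked by an explicit Vandermonde-type determinant in the Jacobian ring of the (smooth, complete-intersection) Kummer cover. If you want to salvage your direct approach on $\P^n$, you would need to supply both the correct ring and a closed-form evaluation of the $(m/r-1)$-fold product; as written, the proposal establishes only $\varsigma(\tilde{\V}_{pr})\le m/r-1$.
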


\begin{remark}
By the theorem above, among the Calabi-Yau families coming from hyperplane arrangements, the ones with minimal Griffiths-Yukawa coupling length (i.e. $\varsigma(\tilde{\V}_{pr})=1$) are those with $(m,n,r)=(n+3,n,\frac{n+3}{2})$, and these are exactly the families considered in \cite{SXZ}. In particular,  we get a positive answer to one of the questions listed at the end of \cite{SXZ}. Besides, the ones with maximal Griffiths-Yukawa coupling length (i.e. $\varsigma(\tilde{\V}_{pr})=n$) are  those with $(m,n,r)=(2n+2,n,2)$, and these are exactly the Calabi-Yau families coming from double covers of $\P^n$ branched along $2n+2$ hyperplanes in general position.
\end{remark}

In section \ref{section:definitions}, we recall the notion of $\C$-VHS, which is more general than the usual notion of $\Q$-VHS or $\R$-VHS. Then we recall the definition of the Griffiths-Yukawa coupling length in the context of $\C$-VHS. In section \ref{section:VHS from hyperplane arrangement}, we introduce the various $\C$-VHS coming from hyperplane arrangements.  The key observation leading to the main result is a special locus $\mathfrak{M}_C$ in $\mathfrak{M}_{AR}$ which parameterizes distinct points on $\P^1$. We use the weight one $\C$-VHS associated to $\mathfrak{M}_C$ to compute some of the Hodge numbers of $\tilde{\V}_{pr}$, hence obtain an upper bound of $\varsigma(\tilde{\V}_{pr})$. In section \ref{section:Jacobian ring computations}, we use the tool of Jacobian rings to compute the Higgs map associated to the weight one $\C$-VHS on $\mathfrak{M}_C$. In this way, we obtain the lower bound of $\varsigma(\tilde{\V}_{pr})$ and finish the proof of the main result.

\section{Definitions}\label{section:definitions}

Throughout this section, we let $M$ be a complex manifold and $\sO_M$ be the sheaf of  holomorphic functions on $M$. We identify holomorphic vector bundles of finite rank on $M$ and sheaves of locally free  $\sO_M$-modules of finite rank on $M$ by the well known way. We first review the definition of complex variation of Hodge structure, which seem to vary slightly according to the source.

\begin{definition} (c.f.  \cite{D})
A complex variation of Hodge structure ($\C-$VHS) of weight $n$ on $M$ is a local system $\V$ of finite dimensional $\C$-vector spaces on $M$ together with a filtration of holomorphic vector bundles:
$$
F^n\sV\subset F^{n-1}\sV\subset \cdots \subset F^1\sV\subset F^0\sV=\sV:=\V\otimes_{\C}\sO_M
$$
satisfying the following Griffiths transversality  condition:
$$
DF^p\sV\subset F^{p-1}\sV\otimes_{\sO_M}\Omega_M, \ \ \forall \ 1\leq p\leq n,
$$
where $D:\sV\rightarrow \sV\otimes_{\sO_M}\Omega_M $ is the Gauss-Manin connection induced by the local system $\V$. The filtration $F^{\cdot}\sV$ is called the Hodge filtration and the integers $h^{p,n-p}(\V):=rank \frac{F^p\sV}{F^{p+1}\sV}$ $(0\leq p\leq n)$ are called the Hodge numbers. Here by convention, $F^{n+1}\sV=0$.
\end{definition}

\begin{remark}
If $\V$ is a weight $n$ rational variation of Hodge strucutre ($\Q$-VHS), then it is easy to see $\V\otimes_{\Q}\C$ is admits a structure of  $\C$-VHS of  weight $n$ .
\end{remark}

Obviously, a morphism $\phi: \W\rightarrow \V$ between two $\C$-VHS of weight $n$ on $M$ is a morphism of local systems preserving the Hodge filtration.

If $(\V, F^n\sV\subset F^{n-1}\sV\subset \cdots \subset F^1\sV\subset F^0\sV=\sV:=\V\otimes_{\C}\sO_M)$ is a $\C$-VHS of weight $n$ on $M$, then by the Griffiths transversality, $\forall \  0\leq p\leq n$, the Gauss-Manin connection $D$ induces a $\sO_M$-linear homomorphism:
$$
\theta^{p,n-p}:E^{p,n-p}\rightarrow E^{p-1,n-p+1}\otimes_{\sO_M}\Omega_M
$$
where $E^{p,n-p}:=\frac{F^p\sV}{F^{p+1}\sV}$. For $x\in M$ and $v\in T_xM$, we denote the contraction of $\theta^{p,n-p}$ with $v$ by
$$
\theta_x^{p,n-p}(v):E_x^{p,n-p}\rightarrow E_x^{p-1,n-p+1},
$$
which is a $\C$-linear map between the fibers $E_x^{p,n-p}$ and $E_x^{p-1,n-p+1}$. We call  $(E:=\oplus_{p=0}^{n}E^{p,n-p}, \theta:=\oplus_{p=0}^n \theta^{p,n-p})$ the Higgs bundle associated to the $\C$-VHS $\V$. A morphism between two $\C$-VHS of weight $n$ induces a morphism between the associated Higgs bundles in an obviously way.

Let $\V$ be a $\C$-VHS of weight $n$ over $M$ and $(E,\theta)$ the associated Higgs bundle. For every $q$ with $1\leq q\leq n$, the $q$th iterated Higgs field
$$
E^{n,0}\xrightarrow{\theta^{n,0}}E^{n-1,1}\otimes \Omega_{M}\xrightarrow{\theta^{n-1,1}}\cdots \xrightarrow{\theta^{n-q+1,q-1}}E^{n-q,q}\otimes S^{q}\Omega_M
$$
defines a morphism
$$
\theta^q: Sym^q(TM)\rightarrow Hom(E^{n,0}, E^{n-q,q})
$$
where $TM$ is the holomorphic tangent bundle of $M$.
The Griffiths-Yukawa coupling length $\varsigma(\V)$ of $\V$ is defined by
$$
\varsigma(\V)=min\{q\geq 1 \mid \theta^q=0\}-1.
$$

\begin{lemma}\label{lemma:embedding VHS}
Let $(\V,F^{\cdot}\sV)$ and $(\tilde{\V},F^{\cdot}\tilde{\sV})$ be $\C$-VHS of weight $n$ on $M$ with Hodge numbers $h^{n,0}(\V)=h^{n,0}(\tilde{\V})=1$. Suppose $\phi: \V \rightarrow \tilde{\V}$ is a morphism of $\C$-VHS satisfying: $\forall \ 0\leq p\leq n$, $\forall \ x\in M$, the induced linear map between the fibers of the  associated Higgs bundles $\phi_x^{p,n-p}: E^{p,n-p}_x\rightarrow \tilde{E}^{p,n-p}_x$ is injective. Then $\varsigma(\V)=\varsigma(\tilde{\V})$.
\end{lemma}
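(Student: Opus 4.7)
The plan is to reduce the lemma to a pointwise linear-algebra statement about the associated Higgs bundles. The starting observation is that any morphism $\phi:\V\to\tilde{\V}$ of $\C$-VHS preserves the Hodge filtration and commutes with the Gauss-Manin connection (being a morphism of local systems), so it descends to a morphism of Higgs bundles. Concretely, we obtain $\sO_M$-linear maps $\phi^{p,n-p}:E^{p,n-p}\to\tilde{E}^{p,n-p}$ for each $p$ satisfying
$$
(\phi^{p-1,n-p+1}\otimes \id_{\Omega_M})\circ \theta^{p,n-p}=\tilde{\theta}^{p,n-p}\circ \phi^{p,n-p}.
$$

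Iterating this identity $q$ times gives, for every $x\in M$ and $v\in\Sym^{q}(T_xM)$, the commutative square
$$
\tilde{\theta}^q_x(v)\circ \phi^{n,0}_x = \phi^{n-q,q}_x\circ \theta^q_x(v),
$$
relating the $q$-fold iterated Higgs fields of $\V$ and $\tilde{\V}$.

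Now the Hodge number hypothesis enters. Both $E^{n,0}_x$ and $\tilde{E}^{n,0}_x$ are one-dimensional, so the injective linear map $\phi^{n,0}_x$ is automatically a linear isomorphism. The assumed injectivity of $\phi^{n-q,q}_x$, combined with this, makes the commutative square above a perfect tool: on the one hand $\tilde{\theta}^q_x(v)=0$ is equivalent to $\tilde{\theta}^q_x(v)\circ \phi^{n,0}_x=0$ because $\phi^{n,0}_x$ is surjective; on the other hand $\phi^{n-q,q}_x\circ \theta^q_x(v)=0$ is equivalent to $\theta^q_x(v)=0$ because $\phi^{n-q,q}_x$ is injective. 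Combining the two, $\tilde{\theta}^q_x(v)=0$ iff $\theta^q_x(v)=0$ at every $x\in M$ and every tangent symmetric power $v$, so $\tilde{\theta}^q=0$ iff $\theta^q=0$.

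Taking the minimum $q\ge 1$ for which this vanishing occurs and subtracting $1$ yields $\varsigma(\V)=\varsigma(\tilde{\V})$. There is no real obstacle here: the content of the lemma is the elementary observation that once the top graded piece has rank one, a fiberwise injective morphism of Higgs bundles becomes a fiberwise isomorphism in the relevant slot, allowing the vanishing of the iterated Higgs field to be transported in both directions.
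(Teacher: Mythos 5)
Your proof is correct and follows essentially the same route as the paper: both set up the commutative diagram relating the iterated Higgs fields of $\V$ and $\tilde{\V}$ via the maps $\phi^{p,n-p}_x$, note that $\phi^{n,0}_x$ is an isomorphism because both source and target are one-dimensional, and use injectivity of $\phi^{n-q,q}_x$ to transport the vanishing of $\theta^q$ in both directions. You have merely written out explicitly the step the paper leaves as ``easy to see.''
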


\begin{proof}
The assumptions give us the following commutative diagram, $\forall \ x\in M$, $\forall \ 1\leq q\leq n$ :
\begin{diagram}
E^{n,0}_x & \rTo^{\theta_x^{n,0}}&E_x^{n-1,1}\otimes \Omega_{M,x}&\rTo^{\theta_x^{n-1,1}}&\cdots & \rTo{\theta_x^{n-q+1,q-1}}&E_x^{n-q,q}\otimes S^{q}\Omega_{M,x}\\
\dTo^{\wr}_{\phi_x^{n,0}} &        &\dInto_{\phi_x^{n-1,1}}\otimes id&                           &     &
                          &\dInto_{\phi_x^{n-q,q}}\otimes id \\
\tilde{E}^{n,0}_x & \rTo^{\tilde{\theta}_x^{n,0}}&\tilde{E}_x^{n-1,1}\otimes \Omega_{M,x}&\rTo^{\tilde{\theta}_x^{n-1,1}}&\cdots & \rTo{\tilde{\theta}_x^{n-q+1,q-1}}&\tilde{E}_x^{n-q,q}\otimes S^{q}\Omega_{M,x}
\end{diagram}
where $\phi_x^{n,0}$ is an isomorphism, and $\phi_x^{n-q,q}$ is injective, $\forall \ 1\leq q\leq n$. It is easy to see that $\varsigma(\V)=\varsigma(\tilde{\V})$ follows from this commutative diagram and the definitions of $\varsigma(\V)$ and $\varsigma(\tilde{\V})$.
\end{proof}

If $V_1$, $V_2$ are linear subspaces of a $\C$-linear space $V$, for $0\leq p\leq n$, we let $\wedge^pV_1\wedge^{n-p}V_2$ denote the linear subspace of $\wedge^n V$ spanned  by elements in the set
$$
\{e_1\wedge \cdots \wedge e_p\wedge e_{p+1}\wedge \cdots \wedge e_n | e_1,\cdots, e_p \in V_1, e_{p+1},\cdots, e_n\in V_2\}.
$$
Similarly, if $W_i$ ($1\leq i\leq n$) are linear subspaces of a $\C$-linear space $W$, we let $W_1\wedge \cdots \wedge W_n$ denote the linear subspace of $\wedge^n W$ spanned by elements in the set
$$
\{e_1\wedge \cdots \wedge e_n | e_i \in W_i, 1\leq i\leq n\}.
$$

Now suppose $(\V, F^{\cdot}\sV)$ is a $\C$-VHS of weight $n$ on $M$, for any $m\geq 1$, we endow the local system $\wedge^m \V$ a Hodge filtration $F^{\cdot}\wedge^m \V$ such that $\wedge^m \V$ becomes a $\C$-VHS of weight $mn$ with this Hodge filtration. The Hodge filtration is defined as follows:

$\forall \ 0\leq p\leq mn$, $\forall \ x \in M$, the fiber of the holomorphic bundle $F^p \wedge^m \V$ at $x$ is $$
\sum_{i_1+\cdots +i_n\geq p} F^{i_1}\sV_x\wedge F^{i_2}\sV_x\wedge \cdots \wedge F^{i_n}\sV_x \subset \wedge^n\sV_x.
$$
With this definition of Hodge filtration, the Griffiths transversality is easy to verify.

We give the following lemma for the purpose of later use:

\begin{lemma}\label{lemma:wedge product}
Let $\W$ be a weight one $\C$-VHS on $M$ with associated Higgs bundle $(F=F^{1,0}\oplus F^{0,1}, \eta: F^{1,0}\rightarrow F^{0,1}\otimes \Omega_M)$. The Hodge numbers are $h^{1,0}(\W)=n$, $h^{0,1}(\W)=k-1\leq n$. Suppose there exist $x\in M $ and $v\in T_xM$ satisfying that the Higgs map $\eta_x(v): F^{1,0}_x\rightarrow F^{0,1}_x$ is surjective. Then  the Griffiths-Yukawa coupling length of $\wedge^n \W$ is $\varsigma(\wedge^n\W) =k-1$.
\end{lemma}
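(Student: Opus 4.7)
The plan is to prove $\varsigma(\wedge^n\W) = k-1$ by matching upper and lower bounds.

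For the upper bound, I would first unwind the Hodge filtration on $\wedge^n\W$. Since $\W$ has weight one, $F^p\sW = \sW$ for $p \leq 0$ and $F^p\sW = F^{1,0}$ for $p=1$, so the general recipe collapses to $F^p\wedge^n\sW = \wedge^p F^{1,0}\wedge^{n-p}\sW$. Taking graded pieces gives a canonical isomorphism $E^{n-q,q} \cong \wedge^{n-q}F^{1,0}\otimes\wedge^q F^{0,1}$. Because $F^{0,1}$ has rank $k-1$, this bundle vanishes for $q \geq k$, forcing $\theta^q = 0$ and hence $\varsigma(\wedge^n\W) \leq k-1$.

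For the lower bound, I need only exhibit a single tangent vector and class in $E^{n,0}_x$ where $\theta^{k-1}$ does not vanish. Using the surjectivity of $\eta_x(v)\colon F^{1,0}_x \to F^{0,1}_x$, choose a basis $e_1,\ldots,e_n$ of $F^{1,0}_x$ so that $e_k,\ldots,e_n$ span $\ker\eta_x(v)$, while the vectors $f_i := \eta_x(v)(e_i)$ for $1 \leq i \leq k-1$ form a basis of $F^{0,1}_x$. The Higgs field on $\wedge^n\W$, coming from the Leibniz rule for the Gauss-Manin connection, acts as a derivation on wedge products; moreover, elements already in $F^{0,1}$ are annihilated by $\eta$. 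A short induction on $q$ then gives
$$
\theta_x(v)^q(e_1\wedge\cdots\wedge e_n) \;=\; q!\sum_{\substack{I\subset\{1,\ldots,k-1\}\\ |I|=q}}\bigwedge_{i=1}^n g_i^I,
$$
with $g_i^I = f_i$ if $i \in I$ and $g_i^I = e_i$ otherwise. Setting $q = k-1$ collapses the sum to the single term $(k-1)!\,f_1\wedge\cdots\wedge f_{k-1}\wedge e_k\wedge\cdots\wedge e_n \in E^{n-k+1,k-1}_x$, which is nonzero because $\{f_1,\ldots,f_{k-1}\}$ is a basis of $F^{0,1}_x$ and $e_k,\ldots,e_n$ are linearly independent in $F^{1,0}_x$. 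Hence $\theta^{k-1} \neq 0$ at $x$, giving $\varsigma(\wedge^n\W) \geq k-1$.

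No step presents a serious obstacle. The only care needed is in identifying the graded pieces of the filtration on $\wedge^n\sW$; once the derivation property of the induced Higgs field is in hand, the iterated computation is straightforward bookkeeping. The two bounds together yield the claimed equality $\varsigma(\wedge^n\W) = k-1$.
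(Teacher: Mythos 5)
Your proposal is correct and follows essentially the same route as the paper: identify the graded pieces of $\wedge^n\W$ to get the vanishing $h^{n-q,q}=0$ for $q\geq k$ (upper bound), then compute the iterated Higgs field as a derivation on wedge products and use surjectivity of $\eta_x(v)$ for the lower bound. Your choice of a basis adapted to $\ker\eta_x(v)$, which collapses the sum to the single nonzero term $(k-1)!\,f_1\wedge\cdots\wedge f_{k-1}\wedge e_k\wedge\cdots\wedge e_n$, is simply a clean way of making explicit the final step that the paper leaves as ``easy to deduce.''
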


\begin{proof}
Let $\V=\wedge^n \W$ be the $\C$-VHS of weight $n$ on $M$. Denote the Higgs bundle associated to $\V$ by $(E=\oplus E^{p,q}, \theta =\oplus \theta^{p,q})$. A direct  computation shows that $h^{n-q,q}(\V)=0$, $\forall \ q\geq k$. From this  we get $\varsigma(\V)\leq k-1$. In order to prove $\varsigma(\V)\geq k-1$, it suffices to show that the iterated Higgs map
$$
\theta^{k-1}_x(v^{k-1}): E^{n,0}_x \rightarrow E^{n-k+1, k-1}_x
$$
is nonzero.

By definitions, for any $0\leq q\leq n$, we can identify $E^{n-q,q}_x$ with $\wedge^{n-q}F^{1,0}_x\wedge^q F^{0,1}_x \subset \wedge^n F_x$. With these identifications, we have the following commutative diagram:

$$
 \begin{array}{ccc}
   E_x^{n,0} & \xrightarrow{\theta_x^{k-1}(v^{k-1})} & E_x^{n-k+1,k-1} \\
   \downarrow{\wr} &  & \downarrow{\wr} \\
   \wedge^nF_x^{1,0} & \xrightarrow{\wedge^n\eta_x(v)} & \wedge^{n-k+1}F_x^{1,0}\wedge^{k-1}F_x^{0,1}
   \end{array}
 $$
where for $e_1\wedge\cdots e_n \in \wedge^nF_x^{1,0}$,
$$
  \wedge^n\eta_x(v)(e_1\wedge\cdots e_n) :=(k-1)!\sum_{1\leq i_1<\cdots <i_{k-1}\leq n} e_1\wedge \cdots \wedge \eta_x(v)e_{i_1}\wedge \cdots \wedge \eta_x(v)e_{i_{k-1}}\wedge \cdots\wedge e_n.
$$

From this diagram and the explicit expression of $\wedge^n\eta_x(v)$, it is easy to deduce the non-vanishing of the map
$\theta^{k-1}_x(v^{k-1})$ from the surjectivity of the map $\eta_x(v): F^{1,0}_x\rightarrow F^{0,1}_x$.

\end{proof}

\section{$\C$-VHS from hyperplane arrangements}\label{section:VHS from hyperplane arrangement}
Now the meaning of letters in the tuple $(m,n,r, \zeta)$ will be fixed to the end of the paper: $m$, $n$, $r$ are  positive integers  satisfying the condition (\ref{CY-condition}), and   $\zeta$ is  a  fixed primitive $r$-th root of unity. If the cyclic group $\Z/r\Z=<\sigma>$ acts on a $\C$-VHS $\V$, we denote $\V_{(i)}$ as the $i$-th eigen-sub $\C$-VHS of $\V$, i.e. the sections of $\V_{(i)}$ consist sections $s$ of $\V$ satisfying $\sigma \cdot s=\zeta^i s$.

An ordered arrangement $\mathfrak{A}=(H_1,\cdots, H_{m})$ of $m$ hyperplanes in $\P^n$ is in general position if no $n+1$ of the hyperplanes intersect in a point.
Let $\mathfrak{M}_{AR}$ denote the coarse moduli space of ordered arrangements of $m$ hyperplanes in $\P^n$ in general position.
As shown in \cite{SXZ2}, $\mathfrak{M}_{AR}$ can be realized as an open subvariety of the affine space $\C^{n(\frac{m}{r}-1)}$ and it admits a natural family $f:\mathcal{X}_{AR}\rightarrow \mathfrak{M}_{AR}$, where each fiber $f^{-1}(\mathfrak{A})$ is the $r$-fold cyclic cover of $\P^n$ branched along the hyperplane arrangement $\mathfrak{A}$. It is easy to see the crepant resolution process in  \cite{SXZ} gives a simultaneous crepant resolution $\pi: \tilde{\mathcal{X}}_{AR}\rightarrow \mathcal{X}_{AR}$ for the family $f$. We denote this smooth projective family of CY manifolds by $\tilde{f}:\tilde{\mathcal{X}}_{AR}\rightarrow \mathfrak{M}_{AR}$.

Let $\mathfrak{M}_{C}$ be the moduli space of ordered distinct $m$ points on $\P^1$ and  $g:\mathcal{C}\rightarrow \mathfrak{M}_{C}$ be the  universal family of $r$-fold cyclic covers of $\P^1$ branched at $m$ distinct points. There is a natural embedding $\mathfrak{M}_{C}\hookrightarrow \mathfrak{M}_{AR}$ (for details, see \cite{SXZ2}, section 2.3).

 We consider the various $\C$-VHS attached to the three families $f$, $\tilde{f}$, $g$:
 $$
 \V:=R^nf_{*}\C, \ \ \tilde{\V}:=R^n\tilde{f}_{*}\C,\ \ \tilde{\V}_{pr}:=(R^n\tilde{f}_{*}\C)_{pr}, \ \ \W:=R^1g_{*}\C,
 $$
where $(R^n\tilde{f}_{*}\C)_{pr}$ means the local system on $\mathfrak{M}_{AR}$ whose fiber over $\mathfrak{A}\in \mathfrak{M}_{AR}$ is the primitive $n$-th cohomology of $\tilde{f}^{-1}(\mathfrak{A})$.
Note that $\V$ is indeed a $\C$-VHS, although the family $f$ is not smooth (see \cite{SXZ2}, section 6). Note also the weights of $\V$, $\tilde{\V}$ and $\tilde{\V}_{pr}$ are $n$, while the weight of $\W$ is one.

Since $\Z/r\Z$ acts naturally on the  families $f:\mathcal{X}_{AR}\rightarrow \mathfrak{M}_{AR}$ and $g:\mathcal{C}\rightarrow \mathfrak{M}_{C}$, we have a decomposition of the  $\C$-VHS into eigen-sub $\C$-VHS:
 $$
 \V=\oplus_{i=0}^{r-1}\V_{(i)},
 \ \ \W=\oplus_{i=0}^{r-1}\W_{(i)}.
 $$

\begin{proposition}\label{prop:basic prop of various VHS} Notations as above, then
\begin{itemize}
  \item[(1)] as $\C$-VHS of weight $n$  on  $\mathfrak{M}_C$, we have $\V_{(1)}\mid_{\mathfrak{M}_{C}}\simeq \wedge^n\W_{(1)}$;
  \item[(2)] the Hodge numbers of $\W_{(1)}$ are: $h^{1,0}(\W_{(1)})=n$, \ \ $h^{0,1}(\W_{(1)})=\frac{m}{r}-1$;

  \item[(3)] the Hodge numbers of $\V_{(1)}$ are:
  $$
  h^{n-q,q}= \left\{
               \begin{array}{ll}
                 {n\choose q}{\frac{m}{r}-1\choose q}, & \hbox{$\ 0\leq q\leq \frac{m}{r}-1$;} \\
                 0, & \hbox{$\frac{m}{r}\leq q\leq n$.}
               \end{array}
             \right.
$$
\item[(4)] $\varsigma(\tilde{\V}_{pr})=\varsigma(\V_{(1)})$;
\item[(5)] $\varsigma(\tilde{\V}_{pr})\leq \frac{m}{r}-1$.

  \end{itemize}

\end{proposition}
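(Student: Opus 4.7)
The plan is to establish the five statements in sequence, with (1) as the geometric core and (2)--(5) following by linear-algebra Hodge computations and one appeal to Lemma \ref{lemma:embedding VHS}. For (1), I would use the birational description of $f|_{\mathfrak{M}_C}$ recalled in \cite{SXZ2}, \S 2.3: over a point of $\mathfrak{M}_C$ corresponding to $m$ distinct points on $\P^1$, the $r$-fold cyclic cover $X$ of $\P^n$ branched along the associated hyperplane arrangement is birational to a quotient of the self-product $C^n$ (where $C$ is the $r$-fold cyclic cover of $\P^1$ branched at the $m$ points) by an explicit finite group acting diagonally. Taking the $\zeta$-eigen-part of the induced action on $H^n(X)$ then isolates the totally anti-invariant K\"unneth summand $\wedge^n H^1(C)_{(1)}$; globalising this identification in families over $\mathfrak{M}_C$ gives the asserted isomorphism of $\C$-VHS of weight $n$.

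For (2), I would invoke the Chevalley--Weil formula. Concretely, writing $C$ affinely as $y^r = \prod_{i=1}^m (x - p_i)$, the $\zeta$-eigen-summand of $H^{1,0}(C)$ is spanned by the forms $x^j\,dx/y$ with $0\le j \le n-1$, yielding $h^{1,0}(\W_{(1)}) = n$; the complementary Hodge number $h^{0,1}(\W_{(1)}) = m/r - 1$ follows either by complex conjugation with the $\zeta^{r-1}$-summand or by matching total ranks.

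For (3), I would use that Hodge numbers of a $\C$-VHS are locally constant on the connected base $\mathfrak{M}_{AR}$; restricting to $\mathfrak{M}_C$ and combining (1) with the identification $(\wedge^n \W_{(1)})^{n-q,q} = \wedge^{n-q} \W_{(1)}^{1,0} \otimes \wedge^q \W_{(1)}^{0,1}$ yields the claimed binomial formula. For (4), the $\Z/r\Z$-action on $\tilde{\mathcal X}_{AR}$ extends from $\mathcal X_{AR}$ via the equivariant resolution, and since $H^{n,0}(\tilde X)$ is one-dimensional and lies in the $\zeta$-eigenspace $\tilde{\V}_{pr,(1)}$ (standard for CY cyclic covers) while the Higgs field is equivariant, one has $\varsigma(\tilde{\V}_{pr}) = \varsigma(\tilde{\V}_{pr,(1)})$. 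The pullback $\pi^*$ realises $\V_{(1)}$ as a sub-$\C$-VHS of $\tilde{\V}_{pr,(1)}$---the exceptional-divisor classes of $\pi$ are of type $(p,p)$ and lie in the $\Z/r\Z$-invariant part, hence do not meet the $\zeta$-eigenspace---and the induced Higgs bundle map is fibrewise injective, so Lemma \ref{lemma:embedding VHS} gives $\varsigma(\V_{(1)}) = \varsigma(\tilde{\V}_{pr,(1)})$.

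Statement (5) is then immediate from (3) and (4): since $h^{n-q,q}(\V_{(1)}) = 0$ for every $q \ge m/r$, every iterated Higgs map $\theta^q$ with $q \ge m/r$ vanishes, forcing $\varsigma(\tilde{\V}_{pr}) \le m/r - 1$. The main obstacle I expect is part (1): the birational identification of $X$ with a quotient of $C^n$ must be made precise enough in families to descend to an honest isomorphism of $\C$-VHS (not just a fibrewise isomorphism of Hodge structures), while simultaneously handling the fact that $f$ itself is singular and $\V$ is a $\C$-VHS only in the extended sense recalled from \cite{SXZ2}, \S 6.
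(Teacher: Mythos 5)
Your proposal follows essentially the same route as the paper: part (1) is exactly the content of the cited identification of the cyclic cover with a quotient of $C^n$ and its K\"unneth eigen-summand (\cite{SXZ2}, Prop.\ 6.3), (2) is the standard cyclic-cover Hodge-number computation, (3) and (5) are the same formal consequences, and (4) is again an application of Lemma \ref{lemma:embedding VHS} combined with injectivity of the resolution pullback (the paper routes the comparison as $\V_{(1)}\hookrightarrow\V\to\tilde\V\hookleftarrow\tilde\V_{pr}$ rather than through $\tilde{\V}_{pr,(1)}$, but this is only a difference of bookkeeping). The argument is correct as outlined.
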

\begin{proof}
For $(1)$, one can see \cite{SXZ2}, Proposition 6.3.

$(2)$ follows from a standard computation of the Hodge numbers of cyclic covers of $\P^1$. One can see for example \cite{M}, $(2.7)$.

$(3)$ follows from $(1)$ and $(2)$.

 $(4)$ follows from Lemma \ref{lemma:embedding VHS}. Indeed, one can verify directly that the embeddings $\V_{(1)}\hookrightarrow \V$ and $\tilde{\V}_{pr}\hookrightarrow \tilde{\V}$ satisfy the hypothesis of Lemma \ref{lemma:embedding VHS}, so we get $\varsigma(\V_{(1)})=\varsigma(\V)$, $\varsigma(\tilde{\V}_{pr})=\varsigma(\tilde{\V})$.  One can use  Theorem 5.41 in \cite{PS} to show the natural morphism $\V\rightarrow \tilde{\V}$ satisfies the hypothesis of Lemma \ref{lemma:embedding VHS}, which  gives $\varsigma(\V)=\varsigma(\tilde{\V})$. Combining these equalities, we get $\varsigma(\tilde{\V}_{pr})=\varsigma(\V_{(1)})$.

 $(5)$ follows from $(3)$ and $(4)$.

\end{proof}

\section{Computations in Jacobian ring }\label{section:Jacobian ring computations}

In this section, we keep the  notations in section \ref{section:VHS from hyperplane arrangement}. We want to analyse the Higgs maps associated to the universal family $g: \sC\rightarrow \mathfrak{M}_C$ in some detail. Recall $\mathfrak{M}_C$ is the coarse moduli space of ordered pairwise distinct $m$ points in $\P^1$. It is well known that   $\mathfrak{M}_C$ can be identified with a Zariski open subset $U$ of $\C^{m-3}$ via the map:
\begin{equation}\notag
\begin{split}
U &\xrightarrow{\sim} \mathfrak{M}_C\\
(a_1,\cdots, a_{m-3})&\mapsto ([1:0],[1;1],[0:1], [1:a_1],\cdots, [1:a_{m-3}])
\end{split}
\end{equation}
where $[z_0:z_1]$ are the homogeneous coordinates on $\P^1$. We fix this identification and view $\mathfrak{M}_C$ as a Zariski open subset of $\C^{m-3}$.

For $a=(a_1,\cdots, a_{m-3})\in \mathfrak{M}_C \subset \C^{m-3}$, let $C=g^{-1}(a)$ be the fiber over $a$ of the universal family $\sC$, then $C$ is the $r$-fold cyclic cover of $\P^1$ branched at the $m$ points $[1:0],[1;1],[0:1], [1:a_1],\cdots, [1:a_{m-3}]$.
We let $Y$ denote the  smooth curve  which is the  complete intersection of the $m-2$ hypersurfaces in $\P^{m-1}$ defined by the equations:
\begin{equation}\notag
\begin{split}
&y_{2}^r-(y_0^r+y_1^r)=0;\\
& y_{2+i}^r-(y_0^r+a_{i}y_1^r)=0, \ 1\leq i\leq   m-3.
\end{split}
\end{equation}
Here $[y_0:\cdots:y_{m-1}]$ are the homogeneous coordinates on $\P^{m-1}$. $Y$ is called the Kummer cover of $C$, and when $a$ varies in $\mathfrak{M}_C$, the Kummer covers of $g^{-1}(a)$ form a family of curves over $\mathfrak{M}_C$.

Let $N=\oplus_{j=0}^{m-1}\Z/r\Z$. Consider the  following group
\begin{equation}\notag
\begin{split}
N_1:=Ker(N &\rightarrow \Z/r\Z)\\
(a_j)&\mapsto \sum_{j=0}^{m-1}a_j
\end{split}
\end{equation}
We define a natural  action of $N$ on $Y$.  $\forall \alpha=(a_0,\cdots, a_{m-1})\in N$, the action of $\alpha$ on $Y$ is induced by
\begin{equation}\notag
\alpha\cdot y_j :=\zeta^{a_j}y_j, \ \ \forall \ 0\leq j\leq m-1.
\end{equation}
Recall $\zeta $ is a fixed $r$-th primitive root of unity.

\begin{proposition}\label{prop:relations between C and Y}
The following statements hold:
\begin{itemize}
\item[(1)] The map $\pi_1: Y\rightarrow \P^1$, $[y_0:\cdots:y_{m-1}]\mapsto [y_0^r:y_{1}^r]$ defines a cover of degree $r^{m-1}$.
\item[(2)]$C\simeq Y/N_1$.
\item[(3)] There exists a natural isomorphism of rational Hodge structures $H^1(C,
\Q)\simeq H^1(Y,\Q)^{N_1}$, where $H^1(Y,\Q)^{N_1}$ denotes the subspace of invariants under $N_1$.
\end{itemize}
\end{proposition}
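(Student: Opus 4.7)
The plan is to handle the three parts in order: (1) by a direct generic-fiber computation, (2) by exhibiting an $N_1$-invariant morphism from $Y$ to the standard affine model of $C$, and (3) by specializing the general theorem on rational cohomology of finite quotients to the isomorphism produced in (2).

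For (1), I first verify that $\pi_1$ is defined on all of $Y$: if $y_0=y_1=0$ at a point of $Y$, then the equations $y_j^r=y_0^r+c_j y_1^r$ (with $c_2=1$ and $c_{2+i}=a_i$) force $y_j=0$ for every $j\geq 2$, contradicting being a point of $\P^{m-1}$. Over a generic $[u:v]\in\P^1$ with $u,v\neq 0$, working in the chart $y_0=1$, the condition $[y_0^r:y_1^r]=[u:v]$ fixes $y_1$ up to an $r$-th root of unity, and then each of the $m-2$ remaining equations fixes the corresponding $y_j$ ($j\geq 2$) up to an $r$-th root of unity. This yields $r\cdot r^{m-2}=r^{m-1}$ points in the fiber, hence $\deg\pi_1=r^{m-1}$.

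For (2), the key object is the section $F:=y_0 y_1\cdots y_{m-1}$ of $\sO_{\P^{m-1}}(m)|_Y$. Under $\alpha=(a_0,\ldots,a_{m-1})\in N$, $F$ transforms by the character $\zeta^{\sum_j a_j}$, so $F$ is invariant precisely on $N_1$. Using the defining equations of $Y$,
\[
F^r=\prod_{j=0}^{m-1}y_j^r
\]
equals the product of the $m$ linear forms in $(y_0^r,y_1^r)$ cutting out the branch divisor of $C\to\P^1$. Hence $y\mapsto(\pi_1(y),F(y))$ factors through $Y/N_1$ and lands in the standard total-space model of $C$ inside $\sO_{\P^1}(m/r)$ (using $r\mid m$). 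For the degree match, the diagonal $\Delta=\langle(1,\ldots,1)\rangle\subset N$ acts trivially on $\P^{m-1}$ and lies in $N_1$ (again by $r\mid m$); since $|N/\Delta|=r^{m-1}=\deg\pi_1$ and $N$ acts faithfully modulo $\Delta$, one checks on a generic fiber that $N/\Delta$ acts simply transitively. Consequently $Y/N\simeq\P^1$ and $Y/N_1\to\P^1$ has degree $[N:N_1]=r$, matching that of $C\to\P^1$. Both $Y/N_1$ and $C$ being smooth projective curves mapping to $\P^1$ with the same degree, the induced morphism $Y/N_1\to C$ has degree $1$ and is therefore an isomorphism.

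For (3), given $C\simeq Y/N_1$ with $N_1$ finite acting on the smooth projective curve $Y$, the pullback by the quotient map $\pi:Y\to C$ yields an isomorphism of rational Hodge structures $H^1(C,\Q)\simeq H^1(Y,\Q)^{N_1}$. This is the standard principle that for a finite group $G$ acting on a smooth projective variety $X$, $\pi^{*}:H^{\bullet}(X/G,\Q)\to H^{\bullet}(X,\Q)^{G}$ is an isomorphism with inverse $\tfrac{1}{|G|}\pi_{*}$; compatibility with the Hodge structure follows from the algebraicity of $\pi$ and of the averaging operator. The main obstacle is the careful identification in (2): matching the affine model produced by $(\pi_1,F)$ with the specific curve $C$ attached to the tuple $([1:0],[1:1],[0:1],[1:a_1],\ldots,[1:a_{m-3}])$---possibly up to a M\"obius transformation on $\P^1$ relating the two natural branch loci---requires bookkeeping that the other steps do not demand.
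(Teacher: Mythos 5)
Your argument is correct and follows essentially the same route as the paper's, which simply verifies that $Y/N_1$ is an $r$-fold cyclic cover of $\P^1$ branched at $m$ points and invokes uniqueness of such covers (with (3) being the standard fact on rational cohomology of finite quotients); your $(\pi_1,F)$ construction and degree count just make that uniqueness step explicit. The M\"obius discrepancy you flag is genuine --- with $z_i=y_i^r$ the branch locus of $\pi_1$ is $[1:0],[0:1],[1:-1],[-a_i:1]$ rather than the listed points --- but it is harmless, since the proposition only asserts an abstract isomorphism of cyclic covers.
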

\begin{proof}
$(1)$ can be verified directly.

$(2)$: By $(1)$, one can verify $Y/N_1$ is a  $r$-fold cyclic cover of $\P^1$ branched at the $m$ points $[1:0],[1;1],[0:1], [1:a_1],\cdots, [1:a_{m-3}]$. Then $C\simeq Y/N_1$ follows from the uniqueness of this kind of covers.

$(3)$ follows from $(2)$ directly.
\end{proof}

Recall $\W=R^1g_{*}\C$ is the weight one $\C$-VHS coming from the universal family $g: \sC\rightarrow \mathfrak{M}_C$, and under the natural $\Z/r\Z$-action, $\W_{(1)}$ is the first eigen-sub $\C$-VHS of $\W$. Let $(F=F^{1,0}\oplus F^{0,1}, \eta:F^{1,0}\rightarrow F^{0,1}\otimes \Omega_{\mathfrak{M}_C})$ be the Higgs bundle  associated to $\W_{(1)}$. At the point $a=(a_1,\cdots, a_{m-3})\in \mathfrak{M}_C \subset \C^{m-3}$, we can identify the fiber $F^{1,0}_a$ with $H^{1,0}(C,\C)_{(1)}$, where $H^{1,0}(C,\C)_{(1)}:=\{\alpha\in H^{1,0}(C,\C) | \sigma^{*}\alpha =\zeta \alpha\}$ is the first eigen subspace of $H^{1,0}(C,\C)$ under the natural $\Z/r\Z=<\sigma>$-action. Similarly, we can identify $F^{0,1}_a$ with  $H^{0,1}(C,\C)_{(1)}$. Under these identifications, it is a standard fact that we have the following commutative diagram (see e.g., \cite{Voisin}, Theorem 10.21):
\begin{equation}\label{equation:commut. diag. for Higgs}
\begin{array}{ccc}
   F^{1,0}_a \otimes T_a \mathfrak{M}_C & \xrightarrow{\eta_a} & F^{0,1}_a \\
   \downarrow{\wr} &  & \downarrow{\wr} \\
   H^{1,0}(C,\C)_{(1)}\otimes T_a\mathfrak{M}_C & \xrightarrow{\psi_a} & H^{0,1}(C,\C)_{(1)}
   \end{array}
\end{equation}
where $\psi_a$ means the composition of the Kodaira-Spencer map $ T_a\mathfrak{M}_C\rightarrow H^1(C, TC)$ and the cup product $H^{1,0}(C,\C)_{(1)}\otimes H^1(C, TC)\rightarrow H^{0,1}(C,\C)_{(1)}$.

Since the isomorphism in Proposition \ref{prop:relations between C and Y}, $(3)$ is equivariant with respect to the $\Z/r\Z$-action (note $\Z/r\Z=N/N_1$ acts naturally on $H^1(Y,\Q)^{N_1}$), we can identify the corresponding  eigen subspaces: $H^{1,0}(C,\C)_{(1)}=H^{1,0}(Y,\C)^{N_1}_{(1)}$, $H^{0,1}(C,\C)_{(1)}=H^{0,1}(Y,\C)^{N_1}_{(1)}$. Under these identifications, we have the commutative diagram:
 \begin{equation}\label{equation:commut. diag. C and Y}
\begin{array}{ccc}
   H^{1,0}(C,\C)_{(1)}\otimes T_a\mathfrak{M}_C  & \xrightarrow{\psi_a} & H^{0,1}(C,\C)_{(1)} \\
   \downarrow{\wr} &  & \downarrow{\wr} \\
   H^{1,0}(Y,\C)^{N_1}_{(1)}\otimes T_a\mathfrak{M}_C & \xrightarrow{\tilde{\psi}_a} & H^{0,1}(Y,\C)^{N_1}_{(1)}
   \end{array}
\end{equation}
where $\tilde{\psi}_a$ is defined similarly as $\psi_a$.

In order to represent  $\tilde{\psi}_a$ more explicitly, we use the tool of Jacobian ring. It is constructed as follows.  In the polynomial ring of $2m-2$ variables $\C[\mu_0,\cdots, \mu_{m-3}, y_0,\cdots, y_{m-1}]$, consider the polynomial
$$
F=\mu_0F_0+\cdots +\mu_{m-3}F_{m-3}
$$
where
\begin{equation}\notag
\begin{split}
&F_0:=y_{2}^r-(y_0^r+y_1^r),\\
&F_i:=y_{i+2}^r-(y_0^r+a_{i}y_1^r), \ 1\leq i\leq m-3.
\end{split}
\end{equation}

Let $J=<\frac{\partial F}{\partial \mu_i}, \frac{\partial F}{\partial y_j} \mid 0\leq i\leq m-3, 0\leq j\leq m-1>$ be the ideal of $\C[\mu_0,\cdots, \mu_{m-3}, y_0,\cdots, y_{m-1}]$ generated by the partial derivatives of $F$. Define the Jacobian ring to be
$$
R:=\C[\mu_0,\cdots, \mu_{m-3}, y_0,\cdots, y_{m-1}]/J
$$

There is a natural bigrading on the polynomial ring $\C[\mu_0,\cdots, \mu_{m-3}, y_0,\cdots, y_{m-1}]$, that is: the $(p,q)-$part $\C[\mu_0,\cdots, \mu_{m-3}, y_0,\cdots, y_{m-1}]_{(p,q)}$ is linearly spanned by the monomials $\Pi_{i=0}^{m-3}\mu_i^{\alpha_i}\Pi_{j=0}^{m-1}y_j^{\beta_j}$ with $\sum_{i=0}^{m-3}\alpha_i=p$, $\sum_{j=0}^{m-1}\beta_j=q$. Since the ideal $J$ is a homogeneous ideal, there is a naturally induced bigrading on $R$, written as  $R=\oplus_{p,q\geq 0} R_{(p,q)}$.

The group $N=\oplus_{j=0}^{m-1}\Z/r\Z$ acts on $R$ through $y_0,\cdots, y_{m-1}$. Explicitly, recall $\zeta$ is a fixed primitive $r-$th root of unit, $\forall \alpha=(\alpha_j)\in N$, we define the action of $\alpha$ on $R$  by
\begin{equation}\notag
\begin{split}
\alpha\cdot y_j&:=\zeta^{\alpha_j}y_j, \ \ \forall \ 0\leq j\leq m-1.\\
\alpha\cdot \mu_i&:=\mu_i,\ \ \forall \ 0\leq i\leq m-3.
\end{split}
\end{equation}

It is obviously that the action of $N$ on $R$ preserves the bigrading.
Let $R_{(p,q)}^{N}$ be the $N-$invariant part of $R_{(p,q)}$, then we have the decomposition of the $N-$invariant subring: $R^{N}=\oplus_{p,q\geq 0} R_{(p,q)}^{N}$. Recall $N_1=Ker (\oplus_{j=0}^{m-1}\Z/r\Z\xrightarrow{\sum}\Z/r\Z)$ is the kernel  subgroup of $N=\oplus_{j=0}^{m-1}\Z/r\Z$ under the summation homomorphism.


\begin{proposition}\label{prop:identification of Hodge structures with Jacobian ring}
The following statements hold:
 \begin{itemize}
 \item[(1)] There are  isomorphisms
 $$
  H^{1,0}(Y,\C)^{N_1}_{(1)}\simeq R^{N}_{(0,mr-m-2r)},\ \ \
  H^{0,1}(Y,\C)^{N_1}_{(1)}\simeq R^{N}_{(1,mr-m-r)}.
 $$
 \item[(2)] Let $(x_1,\cdots, x_{m-3})$ be the coordinates on $\mathfrak{M}_C\subset \C^{m-3}$, define a map
\begin{equation}\notag
\begin{split}
T_{a}\mathfrak{M}_C&\xrightarrow{\phi} R_{(1,r)}\\
\frac{\partial}{\partial x_i}&\mapsto -\mu_iy_1^r
\end{split}
\end{equation}
Under this map and the isomorphisms in $(1)$, we have a commutative diagram

\begin{equation}\label{diagram: reduce to Jacobian ring}
\begin{array}{ccc}
 H^{1,0}(Y,\C)^{N_1}_{(1)}\otimes T_a\mathfrak{M}_C & \xrightarrow{\tilde{\psi}_a} & H^{0,1}(Y,\C)^{N_1}_{(1)} \\
 \downarrow{id\otimes \phi}&  & \downarrow{\simeq} \\
  R^{N}_{(0,mr-m-2r)}\otimes R_{(1,r)} &\xrightarrow{} & R^{N}_{(1,mr-m-r)}
\end{array}
\end{equation}
where the lower  horizontal map  is the natural multiplication in $R$.
\end{itemize}
\end{proposition}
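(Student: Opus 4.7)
The plan is to apply the Jacobian ring description of the middle cohomology of a smooth complete intersection to the Kummer cover $Y \subset \P^{m-1}$, and then to track the $N$-action in order to isolate the $N_1$-invariant, $\zeta$-eigen piece on both sides.

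Since $Y$ is a smooth complete intersection of $c = m - 2$ hypersurfaces $F_0, \dots, F_{m-3}$ of common degree $r$ in $\P^{m-1}$, the whole of $H^1(Y,\C)$ is primitive and admits a canonical Jacobian ring description via the auxiliary polynomial $F = \sum_i \mu_i F_i$; this is the complete-intersection extension of Griffiths's hypersurface result, worked out in this generality by Terasoma, Konno and Nagel. Plugging $c = m - 2$ and $d_i = r$ into the general formula yields canonical isomorphisms $H^{c-1-k,\,k}(Y,\C) \simeq R_{(k,\, (c-1+k)r - m)}$, which for $k = 0,1$ specialise to $R_{(0,\, mr - m - 2r)}$ and $R_{(1,\, mr - m - r)}$ respectively. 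Concretely these isomorphisms are realised by Poincar\'e-type residues involving a class $P \in R_{(k,\,\cdot)}$ and the $(m-1)$-form $\Omega = \sum_{j=0}^{m-1}(-1)^j y_j\, dy_0 \wedge \cdots \widehat{dy_j} \cdots \wedge dy_{m-1}$ on $\P^{m-1}$.

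Both $F$ and the Jacobian ideal $J$ are $N$-invariant, whereas $\Omega$ is an $N$-semi-invariant of character $\zeta$ under the quotient $N/N_1 \simeq \Z/r\Z$ (identifying the image of $\alpha \in N$ with $\sum_j \alpha_j \bmod r$). Consequently an $N$-invariant $P \in R^N_{(k,\,\cdot)}$ is sent by the residue isomorphism to an $N_1$-invariant class on which $N/N_1$ acts by $\zeta$, i.e.\ to a class in $H^{c-1-k,\,k}(Y,\C)^{N_1}_{(1)}$. A dimension count matching the explicit cyclic-cover Hodge numbers recorded in Proposition~\ref{prop:basic prop of various VHS}(2) shows this restricted residue map is bijective, producing the two isomorphisms of~(1).

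For (2) I would compute the Kodaira--Spencer class of $\partial/\partial x_i$ directly from the defining equations. Only $F_i = y_{i+2}^r - y_0^r - x_i y_1^r$ depends on $x_i$, with $\partial F_i / \partial x_i = -y_1^r$; so the standard normal-sheaf/cocycle recipe for the Kodaira--Spencer map of a family of complete intersections represents $\partial/\partial x_i$ inside $R$ by $\partial F/\partial x_i = -\mu_i y_1^r \in R_{(1,r)}$. The general fact, part and parcel of the Jacobian ring package, that under the residue isomorphism of (1) the cup-product action of a Kodaira--Spencer class is realised as multiplication in $R$, then immediately yields the commutative square of~(2). The main obstacle is the equivariance bookkeeping: one must check that the Konno--Terasoma--Nagel isomorphism admits the $N$-equivariant residue representation described above, and that the deck transformation $\sigma$ fixed earlier corresponds exactly to the generator of $N/N_1$ acting by $\zeta$ on $\Omega$, so that the $\zeta$-eigenspace on the Hodge side lines up with the $N$-invariant part on the Jacobian side rather than with some conjugate eigenspace. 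Once these two points are pinned down, the rest is standard.
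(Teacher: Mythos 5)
Your proposal follows essentially the same route as the paper, which simply invokes Terasoma's Jacobian-ring description of the cohomology of complete intersections (\cite{T}, Corollary 2.5 and Proposition 2.6) together with exactly the $N$-equivariance bookkeeping you describe. (One small slip: your general formula $R_{(k,\,(c-1+k)r-m)}$ should read $R_{(k,\,(c+k)r-m)}$ with $c=m-2$; the specializations you then state for $k=0,1$ are nevertheless the correct ones, matching the proposition.)
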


\begin{proof}
 $(1)$ follows from \cite{T}, Corollary 2.5 and its proof.

 $(2)$ follows from $(1)$  and \cite{T}, Proposition 2.6.
\end{proof}

For $\beta=\sum_{i=1}^{m-3}\lambda_i\mu_iy_1^r\in R_{(1,r)}$, let $\psi_{\beta}$ be the following multiplication homomorphism
\begin{equation}\notag
\begin{split}
R^{N}_{(0,mr-m-2r)}&\xrightarrow{\psi_{\beta}}R^{N}_{(1,mr-m-r)}\\
\alpha &\mapsto \alpha\cdot \beta
\end{split}
\end{equation}

Now our key computations are included in the following

\begin{proposition}\label{prop:Jacobian ring map is surjective}
For a generic $\lambda=(\lambda_1,\cdots, \lambda_{m-3})\in \C^{m-3}$, the homomorphism $\psi_{\beta}$ is surjective.
\end{proposition}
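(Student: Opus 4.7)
The plan is to reduce surjectivity to a spanning statement in a polynomial ring, verify it at the convenient specialization $\lambda_i = a_i^n$, and conclude by Zariski openness.

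Set $a_0 := 1$, $u := y_0^r$, $v := y_1^r$. The relations $F_i \in J$ give $y_{i+2}^r \equiv u + a_i v$, so $R^N_{(0, mr-m-2r)}$ has basis $\{u^k v^{n-1-k}\}_{k=0}^{n-1}$. Multiplying $\partial F/\partial y_{i+2} \in J$ by $y_{i+2}$ gives $\mu_i u \equiv -a_i \mu_i v$, and iterating $u \sum_i \mu_i \in J$ (coming from $y_0 \cdot \partial F/\partial y_0$) followed by multiplication by $v^{n-s}$ yields the $n$ relations $\sum_{i=0}^{m-3} a_i^s \mu_i v^n = 0$ for $s = 1, \ldots, n$. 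Thus $R^N_{(1, mr-m-r)}$ is the quotient of $\bigoplus_{i=0}^{m-3} \C \cdot \mu_i v^n \cong \C^{m-2}$ by the subspace $V_n$ spanned by the Vandermonde vectors $(a_i^s)_{i=0}^{m-3}$ for $s = 1, \ldots, n$, confirming the dimension $m/r - 1$ predicted by Proposition \ref{prop:basic prop of various VHS}(2). Writing $\tilde{P}(t) := P(t, 1) \in \C[t]_{\leq n-1}$ and using $\mu_i u = -a_i \mu_i v$ repeatedly simplifies $\psi_\beta(P) = P \beta$ to
\[
\psi_\beta(P) \;=\; \sum_{i=1}^{m-3} \lambda_i\, \tilde{P}(-a_i)\, \mu_i v^n.
\]

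Next, via Lagrange interpolation at $a_0, a_1, \ldots, a_{m-3}$, identify $\C^{m-2} \cong \C[t]/(g)$ with $g(t) := \prod_{i=0}^{m-3}(t - a_i)$; under this identification $V_n$ is the span of $\{t, t^2, \ldots, t^n\}$. Specializing $\lambda_i = a_i^n$ and setting $Q(t) := \tilde{P}(-t)$, the image of $\psi_\beta$ consists of the classes $[t^n Q(t) - Q(1) \ell_0(t)]$ for $Q \in \C[t]_{\leq n-1}$, where $\ell_0$ is the Lagrange basis polynomial with $\ell_0(1) = 1$ and $\ell_0(a_i) = 0$ for $i \geq 1$ (it has degree $m-3$ and nonzero constant term). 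Taking $Q = 1$ and combining with $[t^n] \in V_n$ yields $[\ell_0]$ in the image plus $V_n$; for $Q = t^k$ with $k = 1, \ldots, m/r - 2$ one has $n + k \leq m - 3$, so $t^{n+k}$ needs no reduction modulo $g$, and adding $[\ell_0]$ produces $[t^{n+k}]$. Together with $\{t, \ldots, t^n\} \subset V_n$ this gives all of $\{t, t^2, \ldots, t^{m-3}\}$, while $[\ell_0]$ supplies the missing constant term; hence the image of $\psi_\beta$ plus $V_n$ equals $\C[t]/(g)$ at $\lambda_i = a_i^n$.

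Finally, surjectivity of $\psi_\beta$ is a Zariski-open condition on $\lambda$ (it is the non-vanishing of some $(m-2) \times (m-2)$ minor of the matrix representing the pairing), so surjectivity at one specific $\lambda$ forces it on a dense open subset of $\C^{m-3}$. The main obstacle in this approach is the first step: one must produce the $n$ Vandermonde relations $\sum_i a_i^s \mu_i v^n = 0$ by iterated multiplication within the Jacobian ideal while remaining in the $N$-invariant part, and match the resulting dimension with the Hodge-number count from Proposition \ref{prop:identification of Hodge structures with Jacobian ring} to ensure that no further relations have been missed.
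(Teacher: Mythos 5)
Your proposal is correct and follows essentially the same route as the paper: the same Jacobian-ring relations $\mu_i y_0^r\equiv -a_i\mu_i y_1^r$ and $y_0^{sr}\sum_i\mu_i\equiv 0$ produce the same bases (your $n$ Vandermonde relations on $\{\mu_i v^n\}_{i=0}^{m-3}$ are the paper's relation matrix before eliminating $\mu_0$, with the dimension pinned down by the Hodge-number count in both cases), and your formula $\psi_\beta(P)=\sum_i\lambda_i\tilde P(-a_i)\mu_i v^n$ is exactly the paper's matrix $\bigl((-a_i)^j\lambda_i\bigr)$. The only divergence is at the very end: where the paper simply asserts that generic surjectivity is ``easily seen'' from that matrix, you certify it by the explicit specialization $\lambda_i=a_i^n$, Lagrange interpolation in $\C[t]/(g)$, and Zariski openness --- a correct and welcome filling-in of the one step the paper leaves to the reader.
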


\begin{proof}
We first analyse the relations in $R$, in order to obtain bases of the $\C$-linear spaces $R^{N}_{(0,mr-m-2r)}$
and $R^{N}_{(1,mr-m-r)}$.

By the definition, the following relations hold in $R$:
\begin{equation}\label{equ:original relations in R}
\begin{split}
\frac{\partial F}{\partial \mu_0}&=y_{2}^r-(y_0^r+y_1^r)=0;\\
\frac{\partial F}{\partial\mu_i}&=y_{i+2}^r-(y_0^r+a_{i}y_1^r)=0, \ \ 1\leq i\leq m-3;\\
-\frac{\partial F}{r\partial y_0}&=y_0^{r-1}(\mu_0+\mu_1+\cdots+\mu_{m-3})=0;\\
-\frac{\partial F}{r\partial y_1}&=y_1^{r-1}(\mu_0+a_{1}\mu_1+\cdots+a_{m-3}\mu_{m-3})=0;\\
-\frac{\partial F}{r\partial y_{i+2}}&=\mu_i y_{i+2}^{r-1}=0, \ \ 0\leq i\leq m-3;
\end{split}
\end{equation}

From these relations,  it is easy to see the $\C$-linear space
$R^{N}_{(0,mr-m-2r)}$ is linearly spanned by elements in the set
$$
\{y_0^{br}y_1^{cr}\mid b, c\in \Z_{\geq 0}, \ b+c=m-\frac{m}{r}-2\}.
$$
By Proposition \ref{prop:basic prop of various VHS}, (2), Proposition \ref{prop:relations between C and Y}, (2) amd Proposition \ref{prop:identification of Hodge structures with Jacobian ring}, (1), we can deduce
$$
dim R^{N}_{(0,mr-m-2r)}= dim H^{1,0}(Y,\C)^{N_1}_{(1)}=dim H^{1,0}(C,\C)_{(1)}=h^{1,0}(\W_{(1)})=n=m-\frac{m}{r}-1.
 $$
This implies  that $\{y_0^{br}y_1^{cr}\mid b, c\in \Z_{\geq 0}, \ b+c=m-\frac{m}{r}-2 \}$ is a $\C-$basis of $R^{N}_{(0,mr-m-2r)}$.

 From the relations (\ref{equ:original relations in R}), we get the following relations in $R$:
 \begin{equation}\label{equation:basic relations in R}
 \begin{split}
   &\sum_{i=0}^{m-3}\mu_i y_0^r=0; \\
   & \mu_0y_1^r+ \sum_{i=1}^{m-3}a_i\mu_i y_1^r=0;\\
   & \mu_0y_0^r + \mu_0y_1^r=0;\\
    &\mu_iy_0^r+a_i\mu_iy_1^r=0, \ \ 1\leq i\leq m-3.
    \end{split}
\end{equation}

 The relations above imply directly that the $\C$-linear space $R^{N}_{(1,mr-m-r)}$ is linearly spanned by elements in the set $\{\mu_iy_1^{mr-m-r}\mid i=1,2,\cdots, m-3.\}$.
 Next we want to get a $\C$-basis of $R^{N}_{(1,mr-m-r)}$ from this set.

By the  relations (\ref{equation:basic relations in R}), we know in $R$,
\begin{equation}\notag
 \begin{split}
   & \mu_0y_0^r + \mu_0y_1^r=0;\\
    &\mu_iy_0^r+a_i\mu_iy_1^r=0, \ \ 1\leq i\leq m-3.
    \end{split}
\end{equation}
Then we get $\forall e\geq 0$,
\begin{equation}\label{equation:1 identities}
\begin{split}
  &\mu_0y_0^{er} =  (-1)^e\mu_0y_1^{er};  \\
  &\mu_iy_0^{er} =  (-1)^e a_i^e\mu_iy_1^{er}, \ \ 1\leq i\leq m-3.
\end{split}
\end{equation}
From  the  relations (\ref{equation:basic relations in R}) again,
we get $\forall e\geq 0$,
\begin{equation}\label{equation: 2 identities}
\begin{array}{ccc}
  \mu_0y_0^{er}+\mu_1y_0^{er}+\cdots +\mu_{m-3}y_0^{er} & = & 0 \\
  \mu_0y_1^{er}+a_1\mu_1y_1^{er}+\cdots +a_{m-3}\mu_{m-3}y_1^{er} & = & 0
\end{array}
\end{equation}

From the identities (\ref{equation:1 identities}) and (\ref{equation: 2 identities}), we get $\forall e\geq 0$,
$$
(a_1^e-a_1)\mu_1y_1^{er}+\cdots +(a_{m-3}^e-a_{m-3})\mu_{m-3}y_1^{er}=0
$$
From this we get the following identity
\begin{equation}\notag
\left(
  \begin{array}{cccc}
    a_1^2-a_1 & a_2^2-a_2 & \cdots & a_{m-3}^2-a_{m-3} \\
   a_1^3-a_1 & a_2^3-a_2 & \cdots & a_{m-3}^3-a_{m-3} \\
    \vdots & \vdots &  & \vdots \\
   a_1^{m-\frac{m}{r}-1}-a_1 & a_2^{m-\frac{m}{r}-1}-a_2 & \cdots & a_{m-3}^{m-\frac{m}{r}-1}-a_{m-3} \\
  \end{array}
\right)\left(
         \begin{array}{c}
           \mu_1y_1^{mr-m-r} \\
           \mu_2y_1^{mr-m-r} \\
           \vdots \\
           \mu_{m-3}y_1^{mr-m-r} \\
         \end{array}
       \right)=0.
\end{equation}
Note that $a\in \mathfrak{M}_C$ implies that $\forall \ 1\leq i\leq m-3$, $a_i\neq 0, 1$ and $\forall \ i\neq j$, $a_i\neq a_j$. So that from the matrix equality above, we know that any $m-3-(m-\frac{m}{r}-2)=\frac{m}{r}-1$ distinct  elements in $\{\mu_iy_1^{mr-m-r}\mid i=1,2,\cdots, m-3.\}$ form a $\C-$basis of $R^{N}_{(1,mr-m-r)}$. We can represent the map $\psi_{\beta}$ as follows:
\begin{equation}\notag
\begin{split}
&\psi_{\beta}\left(
              \begin{array}{c}
                 y_1^{mr-m-2r} \\
                y_1^{mr-m-3r}y_0^r \\
                \cdots \\
                y_0^{mr-m-2r} \\
              \end{array}
            \right) =\\
            &\left(
                      \begin{array}{cccc}
                        \lambda_1 & \lambda_2 & \cdots & \lambda_{m-3} \\
                       -a_1\lambda_1 &  -a_2\lambda_2 & \cdots & -a_{m-3}\lambda_{m-3} \\
                        \vdots & \vdots & & \vdots \\
                       (-a_1)^{m-\frac{m}{r}-2}\lambda_1 & (-a_2)^{m-\frac{m}{r}-2}\lambda_2 & \cdots & (-a_{m-3})^{m-\frac{m}{r}-2}\lambda_{m-3} \\
                      \end{array}
                    \right)\left(
                             \begin{array}{c}
                               \mu_1y_1^{mr-m-r} \\
                               \mu_2y_1^{mr-m-r} \\
                               \vdots \\
                               \mu_{m-3}y_1^{mr-m-r} \\
                             \end{array}
                           \right)
                           \end{split}
\end{equation}
From this matrix representation, we can easily see that for a generic $\lambda=(\lambda_1,\cdots, \lambda_{m-3})\in \C^{m-3}$, the homomorphism $\psi_{\beta}$ is surjective.
\end{proof}

Now we can prove our main theorem. Recall the notations from section \ref{section:VHS from hyperplane arrangement}. We have:
\begin{theorem}\label{thm:main theorem}
$\varsigma(\tilde{\V}_{pr})=\frac{m}{r}-1$.
\end{theorem}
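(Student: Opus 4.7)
The plan is to combine the upper bound already proved with a matching lower bound obtained by restricting to the special locus $\mathfrak{M}_C \subset \mathfrak{M}_{AR}$. The upper bound $\varsigma(\tilde{\V}_{pr}) \leq \frac{m}{r} - 1$ is exactly Proposition \ref{prop:basic prop of various VHS}(5), so I focus on the reverse inequality.

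First I would invoke Proposition \ref{prop:basic prop of various VHS}(4) to replace $\tilde{\V}_{pr}$ by $\V_{(1)}$, and then restrict to $\mathfrak{M}_C$, where Proposition \ref{prop:basic prop of various VHS}(1) provides the identification $\V_{(1)}\mid_{\mathfrak{M}_C} \simeq \wedge^n \W_{(1)}$. Since the Griffiths-Yukawa coupling length can only drop under restriction to a closed submanifold, it suffices to show that $\varsigma(\wedge^n \W_{(1)}) = \frac{m}{r}-1$. By Proposition \ref{prop:basic prop of various VHS}(2), $\W_{(1)}$ has Hodge numbers $h^{1,0} = n$ and $h^{0,1} = \frac{m}{r}-1$, so Lemma \ref{lemma:wedge product} (with $k = \frac{m}{r}$) reduces the task to exhibiting a point $a \in \mathfrak{M}_C$ and a tangent vector $v \in T_a \mathfrak{M}_C$ for which the Higgs map $\eta_a(v): F^{1,0}_a \to F^{0,1}_a$ of $\W_{(1)}$ is surjective.

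Second, I would chase this surjectivity through the two commutative diagrams (\ref{equation:commut. diag. for Higgs}) and (\ref{equation:commut. diag. C and Y}), which translate the question into the surjectivity of $\tilde{\psi}_a$ on $H^{1,0}(Y,\C)^{N_1}_{(1)} \otimes T_a \mathfrak{M}_C \to H^{0,1}(Y,\C)^{N_1}_{(1)}$. Proposition \ref{prop:identification of Hodge structures with Jacobian ring} then identifies this with the Jacobian-ring multiplication $R^N_{(0,mr-m-2r)} \otimes R_{(1,r)} \to R^N_{(1,mr-m-r)}$. Choosing a generic tangent direction $v = \sum \lambda_i \partial/\partial x_i$ at a generic $a \in \mathfrak{M}_C$ corresponds to a generic $\beta = \sum \lambda_i \mu_i y_1^r \in R_{(1,r)}$, and Proposition \ref{prop:Jacobian ring map is surjective} asserts that the multiplication map $\psi_\beta$ is surjective for a generic $\lambda$. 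Unwinding the diagrams gives the surjectivity of $\eta_a(v)$ for this choice of $(a,v)$.

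Combining these two steps yields $\varsigma(\wedge^n \W_{(1)}) = \frac{m}{r}-1$, whence $\varsigma(\V_{(1)}) \geq \frac{m}{r}-1$, and together with Proposition \ref{prop:basic prop of various VHS}(4)-(5) we conclude $\varsigma(\tilde{\V}_{pr}) = \frac{m}{r}-1$. The main conceptual point I would want to make explicit is the monotonicity of $\varsigma$ under restriction to a submanifold (which follows at once from the definition $\varsigma(\V) = \min\{q : \theta^q = 0\} - 1$, since a nonzero iterated Higgs map on the restriction forces a nonzero iterated Higgs map on the ambient VHS); everything else is just assembling the preceding propositions.
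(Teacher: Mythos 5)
Your proposal is correct and follows essentially the same route as the paper's proof: upper bound from Proposition \ref{prop:basic prop of various VHS}(5), then the lower bound by restricting to $\mathfrak{M}_C$, applying Lemma \ref{lemma:wedge product} to $\W_{(1)}$, and verifying the surjectivity hypothesis via the chain of commutative diagrams and Proposition \ref{prop:Jacobian ring map is surjective}. Your explicit remark on the monotonicity of $\varsigma$ under restriction is exactly the (implicit) step the paper invokes when writing $\varsigma(\V_{(1)})\geq \varsigma(\V_{(1)}|_{\mathfrak{M}_C})$.
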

\begin{proof}
In Proposition \ref{prop:basic prop of various VHS}, $(5)$, we have established the inequality $\varsigma(\tilde{\V}_{pr})\leq \frac{m}{r}-1$. So it suffices to prove $\varsigma(\tilde{\V}_{pr})\geq \frac{m}{r}-1$.

Combining the commutative diagrams   (\ref{equation:commut. diag. for Higgs}), (\ref{equation:commut. diag. C and Y}), (\ref{diagram: reduce to Jacobian ring}) and Proposition \ref{prop:Jacobian ring map is surjective} together shows that for any $a\in \mathfrak{M}_C$, for a generic tangent vector $v\in T_a\mathfrak{M}_C$,  the Higgs map $\eta_a(v): F_a^{1,0}\rightarrow F_a^{0,1}$ associated to the $\C$-VHS $\W_{(1)}$ is surjective.
This and Proposition \ref{prop:basic prop of various VHS}, $(2)$ imply $\W_{(1)}$ satisfies the hypothesis of Lemma \ref{lemma:wedge product}. Applying  this lemma to $\W_{(1)}$, we get $\varsigma(\wedge^n \W_{(1)})=\frac{m}{r}-1$. This gives $\varsigma(\V_{(1)}|_{\mathfrak{M}_C})=\frac{m}{r}-1$ by Proposition \ref{prop:basic prop of various VHS}, $(1)$. By the definition of the length of Griffith-Yukawa coupling, $\varsigma(\V_{(1)})\geq \varsigma(\V_{(1)}|_{\mathfrak{M}_C})$. So finally we get  the desired inequality $\varsigma(\tilde{\V}_{pr})\geq  \frac{m}{r}-1$ by combining the (in)equalities above and Proposition \ref{prop:basic prop of various VHS}, $(4)$.

\end{proof}

\textbf{Acknowledgements}
This work is supported by Wu Wen-Tsun Key Laboratory of Mathematics, Chinese Academy of Sciences. The first named author is supported by National Natural Science Foundation of China (Grant No. 11622109, No. 11721101).

\end{document}